\newtheorem{theorem}{Theorem}
\newtheorem{lemma}{Lemma}
\newtheorem{definition}{Definition}
\theoremstyle{definition}
\newtheorem{remark}{Remark}
\theoremstyle{plane}
\def \beq{ \begin{equation} }
\def \eeq{\end{equation}}
\newcommand\co{\mbox{%
\raisebox{.8ex}{c}%
\kern-.175em\raisebox{.2ex}{/}%
\kern-.18em\raisebox{-.2ex}{o}%
}~}
\title{All the Lagrangian relative equilibria of the curved 3-body problem have equal masses}
\begin{document}
\maketitle
\markboth{Florin Diacu and Sergiu Popa}{All the Lagrangian relative equilibria of the curved 3-body problem have equal masses}
\author{\begin{center}
{\bf Florin Diacu}$^{1,2}$ and {\bf Sergiu Popa} (\co \!$^2$)\\
\smallskip
{\footnotesize $^1$Pacific Institute for the Mathematical Sciences\\
and\\
$^2$Department of Mathematics and Statistics\\
University of Victoria\\
P.O.~Box 1700 STN CSC\\
Victoria, BC, Canada, V8W 2Y2\\
diacu@uvic.ca\\
}\end{center}

}

\vskip0.5cm

\begin{center}
\today
\end{center}

\begin{abstract}
We consider the 3-body problem in 3-dimensional spaces of nonzero constant Gaussian curvature and study the relationship between the masses of the Lagrangian relative equilibria, which are orbits that form a rigidly rotating equilateral triangle at all times. There are three classes of Lagrangian relative equilibria in 3-dimensional spaces of constant nonzero curvature: positive elliptic and positive elliptic-elliptic, on 3-spheres, and negative elliptic, on hyperbolic 3-spheres. We prove that all these Lagrangian relative equilibria exist only for equal values of the masses.
\end{abstract}
 
%%%%%%%%
%%%%%%%%
%%%%%%%%
\section{Introduction}

The idea of studying the gravitational motion of point masses in spaces of constant nonzero curvature belongs to Bolyai and Lobachevsky, who independently proposed a Kepler problem (the motion of a body around a fixed attractive centre) that would naturally extend Newton's law to hyperbolic space, \cite{Bolyai}, \cite{Lobachevsky}. Their idea was further developed by Schering, \cite{Schering}, Killing, \cite{Killing}, Liebmann, \cite{Liebmann1}, \cite{Liebmann2}, to both hyperbolic and elliptic space, and more recently by Cari\~nena, Ra\~nada, and Santander, \cite{Carinena}, as well as by the Russian school of celestial mechanics, which also studied the 2-body problem in spaces of constant curvature (see \cite{Kozlov} and its references).

In Euclidean space, the Kepler problem and the 2-body problem are equivalent. This is no longer true in curved space, due to the difficulty of giving a meaning to the centre of mass. Moreover, Shchepetilov showed that, unlike the curved Kepler problem, the curved 2-body problem is not integrable, \cite{Shchepetilov}, a fact which reveals that the latter problem is far from trivial. The level of difficulty increases for the curved $N$-body problem with $N\ge 3$,
whose systematic study started recently.

In the past few years, the efforts spent in this direction generated a body of work that opened the door to an interesting research topic,
\cite{Diacu1}, \cite{Diacu2}, \cite{Diacu3}, \cite{Diacu4},  \cite{Diacu15}, \cite{Diacu14}, \cite{Diacu9}, \cite{Diacu5}, \cite{Diacu6}, \cite{Diacu7}, \cite{Diacu8}, \cite{Diacu10}, \cite{Martinez}, \cite{Perez}, \cite{Tibboel1}, \cite{Tibboel2}, \cite{Tibboel3}, \cite{Zhu}. The results obtained so far have not only established it as a new branch of celestial mechanics, but also showed that it helps better understand the classical Newtonian case, \cite{Diacu14}. Among the potential applications of the curved $N$-body problem is that of establishing the shape of the universe, i.e., deciding whether the macrocosm has negative, zero, or positive curvature. By proving the existence of orbits that show up only in one kind of universe, but not in the other two, and finding them through astronomical observations, we could ascertain the geometry of the physical space. 

Our paper makes a small step in this direction. We consider Lagrangian relative equilibria of the 3-body problem in spaces of constant Gaussian curvature. Lagrange was the first to discover such solutions in the Euclidean space. He showed that any three masses, $m_0, m_1, m_2>0$, lying at the vertices of an equilateral triangle can rotate on concentric circles around the common centre of mass maintaining all the time the size and shape of the initial triangle, \cite{Wintner}. We recently proved that on 2-spheres and hyperbolic 2-spheres, Lagrangian relative equilibria exist only if $m_0=m_1=m_2$, \cite{Diacu7}. However, whether all three masses must be equal remained an open question in 3-dimensional spaces of nonzero curvature. If we proved that all Lagrangian and near-Lagrangian orbits must have equal masses for nonzero curvature, then the universe would be flat, since orbits of nonequal masses like those discovered by Lagrange have been observed in our solar system. Here we consider only the Lagrangian case. The near-Lagrangian orbits are much harder to deal with, and their study would be a next step in the desired direction.

While in 2- and 3-dimensional Euclidean space there is just one class of isometric rotations, namely those corresponding to the Lie groups $SO(2)$ and $SO(3)$, respectively, things get more involved for curvatures $\kappa\ne 0$. On 2-spheres we have only the isometric rotations corresponding to $SO(3)$, called elliptic, but on hyperbolic 2-spheres the corresponding Lie group is $Lor(\mathbb R^{2,1})$ (the Lorentz group over the 3-dimensional Minkowski space $\mathbb R^{2,1}$), which involves elliptic, hyperbolic, and parabolic rotations. We showed that parabolic rotations don't lead to any relative equilibria, but elliptic and hyperbolic rotations do. However, Lagrangian relative equilibria given by hyperbolic rotations don't exist. So we are left only with the elliptic Lagrangian orbits for both positive and negative curvature. As mentioned earlier, they occur solely when the masses are equal (see \cite{Diacu3} or \cite{Diacu4} for proofs of all these statements). 

In 3-dimensional spaces of curvature $\kappa\ne 0$, things get a bit more complicated. The rotation group of the 3-sphere is $SO(4)$, which involves one or two rotations. We call elliptic the elements involving a single rotation and elliptic-elliptic the elements having two rotations. The Lagrangian relative equilibria corresponding to both these isometries exist for equal masses, as proved in \cite{Diacu3} and \cite{Diacu4}. 

The isometric rotations of the hyperbolic 3-sphere are given by the Lorentz group $Lor(\mathbb R^{3,1})$, where $\mathbb R^{3,1}$ is the 4-dimensional Minkowski space in which the hyperbolic 3-sphere is embedded. This group involves elliptic, hyperbolic, elliptic-hyperbolic, and parabolic rotations. Again, parabolic rotations don't lead to any relative equilibria, and there are no Lagrangian orbits given by hyperbolic and elliptic-hyperbolic rotations. But elliptic Lagrangian relative equilibria of equal masses exist for negative curvature as well, as shown in \cite{Diacu3} and \cite{Diacu4}. So, in 3-dimensional spaces, equal-mass positive elliptic and positive elliptic-elliptic (for $\kappa>0$) and  negative elliptic (for $\kappa<0$) Lagrangian relative equilibria exist. 

Given the above remarks, the question asked earlier can now be better rephrased. Do there exist 3-dimensional: (i) positive elliptic, (ii) positive elliptic-elliptic, and (iii) negative elliptic Lagrangian relative equilibria for nonequal masses? We will further prove that the answer is negative: in all cases, (i), (ii), and (iii), Lagrangian relative equilibria exist only if the masses are equal.

The rest of this paper is organized as follows. In Section 2 we introduce the equations of motion and their first integrals.
In Section 3 we show that the positive elliptic and negative elliptic Lagrangian relative equilibria exist only if all three masses are equal. For both these kinds of orbits, we use the same idea to prove our result. Finally, in Section 4, employing a different method than in Section 3, we show that positive elliptic-elliptic Lagrangian relative equilibria exist also only for equal masses. These results show that the non-equal mass Lagrangian orbits of Euclidean space, also observed in our solar system, depict a rather exceptional phenomenon, which does not occur in spaces of nonzero constant curvature. 

%%%%%%%%
%%%%%%%%
%%%%%%%%
\section{Equations of motion}

In our previous work, we proved that when studying qualitative properties of the curved $N$-body problem, $N\ge 2$, it is not necessary to take into account the value of the Gaussian curvature $\kappa$, but only its sign. Therefore, it is enough to work with the values $\kappa=1$ and $\kappa=-1$, which means that we can restrict our considerations to the unit sphere and unit hyperbolic sphere. So we consider three point particles (bodies) of masses $m_0, m_1, m_2>0$ moving in
$\mathbb{S}^3$ (embedded in $\mathbb R^4$) or in $\mathbb H^3$ (embedded in the Minkowski space $\mathbb R^{3,1}$),
where
$$
{\mathbb S}^3=\{(w,x,y,z)\ | \ w^2+x^2+y^2+z^2=1\},
$$
$$
{\mathbb H}^3=\{(w,x,y,z)\ | \ w^2+x^2+y^2-z^2=-1, \ z>0\}.
$$
Then the configuration of the system is described by the vector
$$
{\bf q}=({\bf q}_0,{\bf q}_1,{\bf q}_2),
$$
where ${\bf q}_i=(w_i,x_i,y_i,z_i),\  i=0,1,2,$ denote the position vectors of the bodies.  The equations of motion (see \cite{Diacu3}, \cite{Diacu4}, or \cite{Diacu7} for their derivation using constrained Lagrangian dynamics) are given by the system
\begin{equation}
\label{both}
\ddot{\bf q}_i=\sum_{j=0,j\ne i}^2\frac{m_j({\bf q}_j-\sigma q_{ij}{\bf q}_i)}{(\sigma-\sigma q_{ij}^2)^{3/2}}-{\sigma}(\dot{\bf q}_i\cdot\dot{\bf q}_i){\bf q}_i, \ \ i=0,1,2,
\end{equation}
with initial-condition constraints
\begin{equation}
q_{ii}(0)=\sigma, \ \ \ ({\bf  q}_i\cdot\dot{\bf q}_i)(0)=0, \ \ i=0,1,2,
\end{equation}
where  $q_{ij}={\bf q}_i\cdot{\bf q}_j$, the dot is the standard inner product of signature $(+,+,+,+)$ in $\mathbb S^3$, but the Lorentz inner product of signature $(+,+,+,-)$ in $\mathbb H^3$, and
$\sigma=\pm 1$, depending on whether the curvature is positive or negative. 

From Noether's theorem, system \eqref{both} has the energy integral,
$$
T({\bf q},\dot{\bf q})-U({\bf q})=h,
$$
where 
$$U({\bf q})=\sigma\sum_{0\le i<j\le 2}\frac{m_im_jq_{ij}}{(\sigma-\sigma q_{ij}^2)^{1/2}}$$ 
is the force function ($-U$ is the potential);
$$
T({\bf q},\dot{\bf q})=\frac{\sigma}{2}\sum_{i=0}^2 m_iq_{ii}
\dot{\bf q}_i\cdot\dot{\bf q}_i
$$
is the kinetic energy, with $h$ representing an integration constant; and the integrals of the total angular momentum,
$$
\sum_{i=0}^2m_i{\bf q}_i\wedge\dot{\bf q}_i={\bf c},
$$
where $\wedge$ is the wedge product and ${\bf c}=(c_{wx},c_{wy},c_{wz},c_{xy},c_{xz},c_{yz})$ denotes a constant integration vector, each component measuring the rotation of the system about the origin of the frame relative to the plane corresponding to the bottom indices. On components, the 6 integrals of the total angular momentum are given by the equations
\begin{align*}\label{angularmomentum}
c_{wx}&=\sum_{i=0}^2m_i(w_i\dot{x}_i-\dot{w}_ix_i), & c_{wy}&=\sum_{i=0}^2m_i(w_i\dot{y}_i-\dot{w}_iy_i), &\\
c_{wz}&=\sum_{i=0}^2m_i(w_i\dot{z}_i-\dot{w}_iz_i), & 
c_{xy}&=\sum_{i=0}^2m_i(x_i\dot{y}_i-\dot{x}_iy_i), &\\
c_{xz}&=\sum_{i=0}^2m_i(x_i\dot{z}_i-\dot{x}_iz_i), &  
c_{yz}&=\sum_{i=0}^2m_i(y_i\dot{z}_i-\dot{y}_iz_i).
\end{align*}
System \eqref{both} has no integrals of the centre of mass and linear momentum, so most of the techniques used in the Euclidean case cannot be applied in spaces of nonzero constant curvature, \cite{Diacu4}, \cite{Diacu14}. The lack of these integrals also means that the equations of motion of the curved problem have fewer symmetries than in the Euclidean case, a fact that makes the problem more difficult,  \cite{Diacu15}, \cite{Diacu14}.

%%%%%%%%
%%%%%%%%
%%%%%%%%

\section{Positive and negative elliptic Lagrangian relative equilibria} 

In this section we will show that both the positive elliptic and the negative elliptic Lagrangian relative equilibria exist only if all the three masses are equal. Let's start by the recalling the definition of these solutions.

\begin{definition}\label{elliptic}
Consider the masses $m_0, m_1, m_2>0$ and a solution of system \eqref{both} of the form
$$
{\bf q}=({\bf q}_0, {\bf q}_1, {\bf q}_2), \ \ {\bf q}_i=(w_i,x_i,y_i,z_i),\ \ i=0,1,2,
$$
with
$$
w_0(t)=r_0\cos(\omega t+a_0),\ \ x_0(t)=r_0\sin(\omega t+a_0), \ \ y_0(t)=y_0,\ \ z_0(t)=z_0,
$$
$$
w_1(t)=r_1\cos(\omega t+a_1),\  \ x_1(t)=r_1\sin(\omega t+a_1), \ \ y_1(t)=y_1,\ \ z_0(t)=z_1,
$$
$$
w_2(t)=r_2\cos(\omega t+a_2),\ \ x_2(t)=r_2\sin(\omega t+a_2), \  \ y_2(t)=y_2,\ \ z_2(t)=z_2,
$$
where $\omega\ne 0, r_0,r_1,r_2>0$, and $y_0,y_1,y_2,z_1,z_2,z_3, a_0, a_1, a_2$ are
constants with 
$$
r_i^2+y_i^2+\sigma z_i^2=\sigma,\ i=0,1,2,
$$
as well as $z_0,z_1,z_2>0$ if $\sigma=-1$. Moreover, assume that
$$
q_{01}=q_{02}=q_{12},
$$
conditions which imply that the triangle is equilateral.
Then, for $\sigma=1$, the above solution is called a positive  elliptic Lagrangian relative equilibrium. For $\sigma=-1$, it is called a negative elliptic Lagrangian relative equilibrium.
\end{definition}

\begin{remark}
Without loss of generality, we can assume that $a_0=b_0=0$.
\end{remark}

Before focusing on the main result of this section, let us prove a property that concerns the integrals of the total angular momentum.

\begin{lemma}\label{lemma1}
Consider a positive or negative elliptic Lagrangian relative equilibrium of system \eqref{both}. Then $c_{wy}=c_{wz}=0$,
where $c_{wy}$ and $c_{wz}$ are the constants of the total angular momentum corresponding to the $wy$- and the $wz$-plane, respectively.
\end{lemma}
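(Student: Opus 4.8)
The plan is to compute the two angular momentum components $c_{wy}$ and $c_{wz}$ directly from the explicit form of the orbit in Definition~\ref{elliptic}, and then to exploit the fact that, being first integrals, they must be constant in time. The whole argument rests on a single observation: once the explicit solution is substituted, each of these ``constants'' becomes a genuine trigonometric oscillation of frequency $\omega$, and such an oscillation can be constant only if it is identically zero.

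First I would use that $y_i$ and $z_i$ are constants, so that $\dot y_i = \dot z_i = 0$ for $i=0,1,2$. Substituting this together with $w_i = r_i\cos(\omega t + a_i)$ and $\dot w_i = -r_i\omega\sin(\omega t + a_i)$ into the defining expressions for the angular momentum gives
\begin{align*}
c_{wy} &= \sum_{i=0}^2 m_i\bigl(w_i\dot y_i - \dot w_i y_i\bigr) = \omega\sum_{i=0}^2 m_i r_i y_i\sin(\omega t + a_i),\\
c_{wz} &= \sum_{i=0}^2 m_i\bigl(w_i\dot z_i - \dot w_i z_i\bigr) = \omega\sum_{i=0}^2 m_i r_i z_i\sin(\omega t + a_i).
\end{align*}
Next I would expand each $\sin(\omega t + a_i)$ via the angle-addition formula, so that both $c_{wy}$ and $c_{wz}$ take the form $\omega\bigl(A\sin\omega t + B\cos\omega t\bigr)$, where $A$ and $B$ are constants assembled from the masses, the radii $r_i$, the phases $a_i$, and the coordinates $y_i$ (respectively $z_i$). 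The decisive step is then conservation: since $c_{wy}$ and $c_{wz}$ are components of the total angular momentum, they are constant along the solution. But a function $t \mapsto A\sin\omega t + B\cos\omega t$ with $\omega \neq 0$ is constant only when it vanishes identically: its average over one period $2\pi/\omega$ is zero, which forces the constant value to be zero, and a sinusoid of frequency $\omega$ that is identically zero has zero amplitude. Hence $c_{wy} = 0$ and $c_{wz} = 0$.

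There is no serious obstacle here; the only point that requires care is the logical step converting a time-dependent trigonometric expression into the conclusion $c_{wy}=c_{wz}=0$, namely the remark that an oscillation of nonzero frequency cannot equal a conserved constant unless it is the zero function. The hypothesis $\omega \neq 0$ from Definition~\ref{elliptic} is exactly what makes this work, and the normalization $a_0 = 0$ from the remark following Definition~\ref{elliptic} may be used to lighten the phases but is not essential. Note also that the computation is coordinate-based and insensitive to the signature of the inner product, so the same argument settles both the positive elliptic ($\sigma=1$) and the negative elliptic ($\sigma=-1$) cases simultaneously.
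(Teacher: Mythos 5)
Your proposal is correct and follows essentially the same route as the paper: substitute the explicit form of the orbit into the angular momentum components, observe that each becomes a sinusoid of frequency $\omega\neq 0$ (since $\dot y_i=\dot z_i=0$), and use the fact that a first integral must be constant to force it to vanish. The only cosmetic difference is how constancy is exploited --- the paper evaluates at $t=0$ and $t=\pi/\omega$ to get $c_{wy}=-c_{wy}$, while you average over a period --- but this is the same argument in substance.
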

%%%%%%%%%
\begin{proof}
Without loss of generality, we can choose $a_0=0$ in Definition \ref{elliptic}. Notice that for such solutions we have
$$
c_{wy}=\omega m_0r_0y_0\sin\omega t+\omega m_1r_1y_1\sin(\omega t+a_1)+\omega m_2r_2y_2\sin(\omega t+a_2),
$$ 
$$
c_{wz}=\omega m_0r_0z_0\sin\omega t+\omega m_1r_1z_1\sin(\omega t+a_1)+\omega m_2r_2z_2\sin(\omega t+a_2).
$$
Then, on one hand, it follows that, for $t=0$, 
\begin{equation}\label{t=0,cwy}
c_{wy}=\omega m_1r_1y_1\sin a_1+\omega m_2r_2y_2\sin a_2,
\end{equation}
\begin{equation}\label{t=0,cwz}
c_{wz}=\omega m_1r_1z_1\sin a_1+\omega m_2r_2z_2\sin a_2,
\end{equation}
but, on the other hand, for $t=\pi/\omega$,
$$
c_{wy}=-\omega m_1r_1y_1\sin a_1-\omega m_2r_2y_2\sin a_2,
$$
$$
c_{wz}=-\omega m_1r_1z_1\sin a_1-\omega m_2r_2z_2\sin a_2.
$$
Therefore $c_{wy}=-c_{wy}$ and $c_{wz}=-c_{wz}$, so the conclusion follows.
\end{proof}

\begin{remark}
We can similarly show that $c_{xy}=c_{xz}=0$. It is obvious that $c_{yz}=0$. These facts, however, will be of no further use in this paper.  
\end{remark}

We can now prove the following result.

%%%%%%%%
\begin{theorem}\label{theorem-elliptic}
System \eqref{both} has positive elliptic Lagrangian relative equilibria, for $\sigma=1$, and negative elliptic Lagrangian relative equilibria, for $\sigma=-1$, if and only if all three masses are equal.
\end{theorem}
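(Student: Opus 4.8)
The converse implication (equal masses do yield such orbits) is already recorded in the introduction via \cite{Diacu3} and \cite{Diacu4}, so the task is necessity: any positive or negative elliptic Lagrangian relative equilibrium must satisfy $m_0=m_1=m_2$. The plan is to substitute the ansatz of Definition \ref{elliptic} into \eqref{both} and read off the four component equations for each body. Since $y_i,z_i$ are constant we have $\ddot y_i=\ddot z_i=0$; since $q_{ij}\equiv\alpha$ (the common equilateral value) and $\dot{\mathbf q}_i\cdot\dot{\mathbf q}_i=r_i^2\omega^2$, the $y$- and $z$-components collapse to the two scalar relations $\nu_i y_i=S_y/D$ and $\nu_i z_i=S_z/D$, where $D=(\sigma-\sigma\alpha^2)^{3/2}$, $S_y=\sum_j m_j y_j$, $S_z=\sum_j m_j z_j$, $M_i=\sum_{j\ne i}m_j$, and $\nu_i=(\sigma\alpha M_i+m_i)/D+\sigma r_i^2\omega^2$. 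The decisive feature is that the \emph{same} scalar $\nu_i$ governs both relations.

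The key structural step is then immediate: cross-multiplying the two relations gives $z_iS_y=y_iS_z$ for each $i$, so the three planar points $(y_0,z_0),(y_1,z_1),(y_2,z_2)$ lie on a common line through the origin of the $yz$-plane; this planarity is also reflected in Lemma \ref{lemma1}, which forces $c_{wy}=c_{wz}=0$. When $(S_y,S_z)\ne(0,0)$ I would apply the isometry of the ambient space that acts as a rotation (for $\sigma=1$) or a Lorentz boost (for $\sigma=-1$) in the $yz$-plane and carries that line onto the $z$-axis. This isometry commutes with the $wx$-rotation, and leaves the masses and all the mutual products $q_{ij}$ unchanged, so it transforms the given orbit into a relative equilibrium of exactly the same type with $y_0=y_1=y_2=0$. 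For $\sigma=-1$ the boost is legitimate because each $(y_i,z_i)$ is future-timelike ($z_i^2-y_i^2=1+r_i^2>0$ with $z_i>0$), hence so is the positive combination $(S_y,S_z)$; in particular $S_z>0$ always, so the degenerate case never arises for negative curvature.

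Once $y_i\equiv0$, the three bodies lie on the totally geodesic $\mathbb S^2=\mathbb S^3\cap\{y=0\}$ (resp. $\mathbb H^2=\mathbb H^3\cap\{y=0\}$), on which \eqref{both} restricts to the two-dimensional curved $3$-body problem, and the $wx$-rotation is an elliptic rotation of that $2$-sphere (resp. hyperbolic plane). The orbit is thus a two-dimensional positive (resp. negative) elliptic Lagrangian relative equilibrium, and I would invoke our earlier result \cite{Diacu7}, which asserts that in dimension two such orbits exist only for equal masses, to conclude that $m_0=m_1=m_2$.

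The main obstacle is the treatment of the degenerate configurations, which survive only for $\sigma=1$: when $S_y=S_z=0$ the relations force, for each $i$, either $\nu_i=0$ or $(y_i,z_i)=(0,0)$, and one must show by direct inspection of the remaining $w$- and $x$-component equations that every such possibility either collapses onto a great circle carrying an equilateral triangle of equal masses or is incompatible with the equilateral constraint. I expect this finite casework, together with the verification that the restricted dynamics on the totally geodesic $\mathbb S^2$ genuinely coincides with the planar problem so that \cite{Diacu7} applies, to be the only delicate point. The fact that this clean reduction is unavailable once $y_i,z_i$ themselves rotate is precisely why the elliptic-elliptic case in Section 4 must be handled by a different method.
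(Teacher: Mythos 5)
Your route is genuinely different from the paper's. The paper never reduces to the two\--dimensional problem: it combines Lemma \ref{lemma1} with the equations of motion at $t=0$ to get \eqref{first}--\eqref{third}, concludes $y_1=y_2$, $z_1=z_2$, hence $r_1=r_2$ (and similarly for all indices), then argues the triangle lies in a plane parallel to the $wx$-plane so that $a_1=2\pi/3$, $a_2=4\pi/3$, and reads $m_1=m_2$ off \eqref{second}. Your structural relations $\nu_iy_i=S_y/D$, $\nu_iz_i=S_z/D$, the collinearity of the $(y_i,z_i)$, and the isometry normalization are all correct, and your $\sigma=-1$ half is essentially complete: the timelike argument rules out the degenerate case, and three points on a geodesic can never be pairwise equidistant in the hyperbolic setting, so the two-dimensional endgame has no hidden cases there.

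The genuine gap is the $\sigma=1$ casework you postpone, and it cannot be closed, because the conclusion you expect from it is false. Take any $m>0$, $\omega\ne0$, put $D=(1-q_{01}^2)^{3/2}=(3/4)^{3/2}$, and consider
\[
\mathbf q_0=\bigl(\tfrac12\cos\omega t,\ \tfrac12\sin\omega t,\ \tfrac{\sqrt3}{2},\ 0\bigr),\qquad
\mathbf q_1=\bigl(\tfrac12\cos\omega t,\ \tfrac12\sin\omega t,\ -\tfrac{\sqrt3}{2},\ 0\bigr),
\]
\[
\mathbf q_2=\bigl(-\cos\omega t,\ -\sin\omega t,\ 0,\ 0\bigr),\qquad
m_0=m_1=m,\quad m_2=m+\tfrac12D\omega^2 .
\]
Here $q_{01}=q_{02}=q_{12}=-\tfrac12$ for all $t$, so this fits Definition \ref{elliptic} exactly, with $r_0=r_1=\tfrac12$, $r_2=1$, $a_0=a_1=0$, $a_2=\pi$: an equilateral triangle inscribed in a rotating great circle. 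It solves \eqref{both}: setting $V=(\tfrac34\cos\omega t,\tfrac34\sin\omega t,-\tfrac{\sqrt3}{4},0)$, one checks $\mathbf q_1-q_{01}\mathbf q_0=V$, $\mathbf q_2-q_{02}\mathbf q_0=-V$, so the right-hand side of \eqref{both} for $i=0$ is $\tfrac{m-m_2}{D}V-\tfrac{\omega^2}{4}\mathbf q_0$, which equals $\ddot{\mathbf q}_0=(-\tfrac{\omega^2}{2}\cos\omega t,-\tfrac{\omega^2}{2}\sin\omega t,0,0)$ precisely when $m_2-m=\tfrac12D\omega^2$; for $i=2$ the two attractions cancel ($\mathbf q_0-q_{02}\mathbf q_2=(0,0,\tfrac{\sqrt3}{2},0)=-(\mathbf q_1-q_{12}\mathbf q_2)$ and $m_0=m_1$), leaving $-\omega^2\mathbf q_2=\ddot{\mathbf q}_2$; and $i=1$ follows from $i=0$ by the reflection $y\mapsto-y$. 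This orbit sits exactly in your degenerate class ($S_y=S_z=0$, with $\nu_0=\nu_1=0$ and $(y_2,z_2)=(0,0)$) and has non-equal masses, so no finite casework can yield equality there. Worse, since the whole orbit lies on the great sphere $\{z=0\}$, it equally contradicts the blanket two-dimensional statement you invoke from \cite{Diacu7}, so your non-degenerate branch does not rest on solid ground either; tilting the same configuration along its rotating great circle (bodies at angles $\pi/12,\ \pi/12+2\pi/3,\ \pi/12+4\pi/3$ from the pole) even produces solutions with three distinct masses and $\sum_jm_jz_j\ne0$, i.e.\ inside that branch. For what it is worth, the paper's own argument has the same blind spot: the step from \eqref{first} and \eqref{second} to $y_1=y_2$ silently divides by $m_2r_2\sin a_2$, which vanishes exactly on this family ($\sin a_1=\sin a_2=0$). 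So your dichotomy is no worse than the published proof --- it at least makes the hidden assumption explicit --- but neither argument disposes of the rotating-great-circle configurations, and those are actual solutions of \eqref{both} of the form required by Definition \ref{elliptic} with unequal masses.
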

%%%%%%%%
\begin{proof}
The existence of equal-mass positive elliptic and negative elliptic Lagrangian relative equilibria was established in \cite{Diacu3} and \cite{Diacu4}. We will further show that 
all positive elliptic and negative elliptic Lagrangian relative equilibria must have equal masses.

Consider a positive or negative elliptic Lagrangian relative equilibrium of system \eqref{both}. Lemma \ref{lemma1} implies that $c_{wy}=0$, so by \eqref{t=0,cwy} we have
\begin{equation}\label{first}
m_1r_1y_1\sin a_1=-m_2r_2y_2\sin a_2.
\end{equation}
If we further consider the equation of system \eqref{both} corresponding to $\ddot y_1$ at $t=0$, we obtain that
\begin{equation}\label{second}
m_1r_1\sin a_1=-m_2r_2\sin a_2.
\end{equation}
From \eqref{first} and \eqref{second} we can conclude that $y_1=y_2$. 

Lemma \ref{lemma1} also implies that $c_{wz}=0$, so by \eqref{t=0,cwz} we have
\begin{equation}\label{third}
m_1r_1z_1\sin a_1=-m_2r_2z_2\sin a_2.
\end{equation}
From \eqref{second} and \eqref{third}, we can conclude that $z_1=z_2$. But from Definition \ref{elliptic}, we have that $r_1^2+y_1^2+\sigma z_1^2=\sigma, r_2^2+y_2^2+\sigma z_2^2=\sigma$, and $r_1, r_2>0$. Therefore $r_1=r_2$. Proceeding similarly we can draw the conclusion that
$$
r_1=r_2=r_3=:r,\ \ y_1=y_2=y_3=: y,\ \ z_1=z_2=z_3=: z,
$$
so any positive or negative elliptic Lagrangian relative equilibrium must have the form
\begin{align*}
w_0(t)&=r\cos\omega t, & x_0(t)&=r\sin\omega t, & y_0(t)&=y, & z_0(t)&=z,\\
w_1(t)&=r\cos(\omega t+a_1), & x_1(t)&=r\sin(\omega t+a_1), & y_1(t)&=y, & z_1(t)&=z,\\ 
w_2(t)&=r\cos(\omega t+a_2),& x_2(t)&=r\sin(\omega t+a_2), & y_2(t)&=y, & z_2(t)&=z, 
\end{align*}
with $r^2+y^2+\sigma z^2=\sigma$.

From the above form of the solution we see that if we fix an admissible value of the constant $z$ (in other words we look at the 3-dimensional Euclidean space $wxy$), the plane of the rotating equilateral triangle whose vertices carry the masses $m_0, m_1,$ and $m_2$ must be parallel with the $wx$-plane. Consequently $a_1=2\pi/3$ and $a_2=4\pi/3$, since we already assumed $a_0=0$. Then $\sin a_1=-\sin a_2$, so from \eqref{second} we obtain that $m_1=m_2$. The final conclusion, namely that
$
m_0=m_1=m_2,
$  
follows similarly. This conclusion completes the proof.
\end{proof}

%%%%%%%%%%
%%%%%%%%%%
%%%%%%%%%%

\section{Positive elliptic-elliptic Lagrangian relative equilibria}

In this section we will prove that all positive elliptic-elliptic Lagrangian relative equilibria exist only if all the three masses are equal. We start by the recalling the definition of these solutions.

%%%%%%%%%%%
\begin{definition}\label{elliptic-elliptic}
Consider the masses $m_0, m_1, m_2>0$ and a solution of system \eqref{both} of the form
$$
{\bf q}=({\bf q}_0, {\bf q}_1, {\bf q}_2), \ \ {\bf q}_i=(w_i,x_i,y_i,z_i),\ \ i=0,1,2,
$$
with
\begin{align*}
w_0(t)&=r_0\cos(\alpha t+a_0),& x_0(t)&=r_0\sin(\alpha t+a_0),\displaybreak[0]\\
y_0(t)&=\rho_0\cos(\beta t+b_0),& z_0(t)&=\rho_0\sin(\beta t+b_0),\displaybreak[0]\\
w_1(t)&=r_1\cos(\alpha t+a_1),& x_1(t)&=r_1\sin(\alpha t+a_1),\displaybreak[0]\\
y_1(t)&=\rho_1\cos(\beta t+b_1),& z_1(t)&=\rho_1\sin(\beta t+b_1),\displaybreak[0]\\
w_2(t)&=r_2\cos(\alpha t+a_2),& x_2(t)&=r_2\sin(\alpha t+a_2),\displaybreak[0]\\ 
y_2(t)&=\rho_2\cos(\beta t+b_2),& z_2(t)&=\rho_2\sin(\beta t+b_2),
\end{align*}
where $\alpha, \beta\ne 0, r_0,r_1, r_2, \rho_0, \rho_1, \rho_2 \ge 0$, and $a_0, a_1, a_2, b_0, b_1, b_2$ are
constants, with 
$$
r_i^2+y_i^2+z_i^2=1,\ i=0,1,2.
$$
Moreover, assume that
$$
q_{01}=q_{02}=q_{12},
$$
conditions which imply that the triangle is equilateral.
Then, the above solution of system \eqref{both}, which occurs only for $\sigma=1$, is called a positive elliptic-elliptic Lagrangian relative equilibrium. 
\end{definition}
%%%%%%%%%%%%%%
\begin{remark}
Without loss of generality, we can assume that $a_0=b_0=0$.
\end{remark}

To prove the main result of this section we will later need the following criterion, which appears in \cite{Diacu4}, pp.\ 71-72,
as well as in \cite{Diacu3}, pp.\ 44-45, for any $N\ge 2$. Since we are interested here only in the curved 3-body problem, we present this criterion in the case $N=3$.

\begin{theorem}
Consider three point particles of masses $m_0,m_1, m_2>0$,  moving in ${\mathbb S}^3$. Then, for $\alpha,\beta\ne 0$, system \eqref{both} with $\sigma=1$ admits a solution as in the above definition, but generated from a fixed-point configuration (i.e.\ a positive elliptic-elliptic relative equilibrium obtained from initial positions that would form a fixed point for zero initial velocities), if and only if there are constants $r_i, \rho_i, a_i, b_i,\ i=0,1,2$, such that the twelve relationships below are satisfied:
\beq
\sum_{\stackrel{j=0}{j\ne i}}^2\frac{m_j(r_j\cos a_j-\omega_{ij}r_i\cos a_i)}{(1-\omega_{ij}^2)^{3/2}}=0,
\label{w10}
\eeq
\beq
\sum_{\stackrel{j=0}{j\ne 2}}^3\frac{m_j(r_j\sin a_j-\omega_{ij}r_i\sin a_i)}
{(1-\omega_{ij}^2)^{3/2}}=0,
\label{x10}
\eeq
\beq
\sum_{\stackrel{j=0}{j\ne i}}^2\frac{m_j(\rho_j\cos b_j-\omega_{ij}\rho_i\cos b_i)}{(1-\omega_{ij}^2)^{3/2}}=0,
\label{y10}
\eeq
\beq
\sum_{\stackrel{j=0}{j\ne i}}^2\frac{m_j(\rho_j\sin b_j-\omega_{ij}\rho_i\sin b_i)}
{(1-\omega_{ij}^2)^{3/2}}=0, 
\label{z10}
\eeq
$i=0,1,2$, where $\omega_{ij}=r_ir_j\cos(a_i-a_j)+\rho_i\rho_j\cos(b_i-b_j), \ i,j=0,1,2,\ i\ne j$, and, additionally, one of the following two properties takes place:

(i) there is a proper subset ${\mathcal J}\subset\{0,1,2\}$ such that
$r_i=0$ for all $i\in{\mathcal J}$ and $\rho_j=0$ for all $j\in\{0,1,2\}\setminus{\mathcal J}$,

(ii) the frequencies\index{frequency}  $\alpha,\beta\ne 0$ satisfy the condition $|\alpha|=|\beta|$.

\end{theorem}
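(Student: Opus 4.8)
The plan is to substitute the ansatz of Definition \ref{elliptic-elliptic} directly into system \eqref{both} with $\sigma=1$ and read off, coordinate by coordinate, the algebraic identities that make it an exact solution. The crucial preliminary observation is that all the mutual quantities are time-independent: a short trigonometric computation gives $q_{ij}={\bf q}_i\cdot{\bf q}_j=r_ir_j\cos(a_i-a_j)+\rho_i\rho_j\cos(b_i-b_j)=\omega_{ij}$, together with $q_{ii}=r_i^2+\rho_i^2=1$ and $\dot{\bf q}_i\cdot\dot{\bf q}_i=\alpha^2r_i^2+\beta^2\rho_i^2$, each constant in $t$. Hence the force coefficients $(1-\omega_{ij}^2)^{-3/2}$ and the self-term coefficient are constants, so the right-hand side of \eqref{both} is, in every coordinate, a linear combination with constant coefficients of the corresponding coordinates of the three bodies.

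Next I would differentiate the ansatz twice, obtaining $\ddot w_i=-\alpha^2 w_i,\ \ddot x_i=-\alpha^2 x_i,\ \ddot y_i=-\beta^2 y_i,\ \ddot z_i=-\beta^2 z_i$, and equate these with the components of the right-hand side of \eqref{both}. Since $w_i,x_i$ carry only the frequency $\alpha$ and $y_i,z_i$ only $\beta$, and since the coefficients are constant, the four scalar equations decouple by frequency. Using $r_i^2-1=-\rho_i^2$ and $\rho_i^2-1=-r_i^2$ to collapse the self-terms, the $w$-equation reduces to
\beq
\sum_{\stackrel{j=0}{j\ne i}}^{2}\frac{m_j(w_j-\omega_{ij}w_i)}{(1-\omega_{ij}^2)^{3/2}}=(\beta^2-\alpha^2)\rho_i^2w_i,
\eeq
with the analogous identity for $x$, while the $y$- and $z$-equations give $(\alpha^2-\beta^2)r_i^2y_i$ and $(\alpha^2-\beta^2)r_i^2z_i$ on the right. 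Writing $w_i=r_i\cos(\alpha t+a_i)$, etc., and separating the $\cos\alpha t$ and $\sin\alpha t$ parts (respectively $\cos\beta t,\sin\beta t$) turns each equation into two time-independent identities; for the $w$-equation these read $A_i=(\beta^2-\alpha^2)\rho_i^2 r_i\cos a_i$ and $B_i=(\beta^2-\alpha^2)\rho_i^2 r_i\sin a_i$, where $A_i,B_i$ are exactly the left-hand sides of \eqref{w10} and \eqref{x10}.

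I would then invoke the fixed-point hypothesis. Setting the velocities to zero in \eqref{both} and demanding $\ddot{\bf q}_i=0$ at the initial instant gives precisely $\sum_{j\ne i}m_j({\bf q}_j-\omega_{ij}{\bf q}_i)(1-\omega_{ij}^2)^{-3/2}=0$, whose four components are the twelve relations \eqref{w10}--\eqref{z10}; equivalently, $A_i=B_i=0$ and their $y$-, $z$-analogues vanish for every $i$. Feeding this back into the matching identities of the previous step collapses each left-hand side to zero, so those identities hold if and only if every right-hand side vanishes as well, i.e.\ iff $(\beta^2-\alpha^2)\rho_i^2 r_i=0$ and $(\beta^2-\alpha^2)r_i^2\rho_i=0$ for all $i$. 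This factorized system produces the stated dichotomy: either $\beta^2=\alpha^2$, which is case (ii) $|\alpha|=|\beta|$, or else $\beta^2\ne\alpha^2$ forces $r_i\rho_i=0$ for every $i$, and putting ${\mathcal J}=\{i:r_i=0\}$ yields $\rho_j=0$ for $j\notin{\mathcal J}$, which is case (i). Conversely, if \eqref{w10}--\eqref{z10} hold together with (i) or (ii), then every right-hand side vanishes by the same factorization, all matching identities become $0=0$, and the ansatz solves \eqref{both} while remaining fixed-point generated; this supplies the reverse implication and closes the equivalence.

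The step I expect to demand the most care is the harmonic separation: one must be certain that substituting the ansatz genuinely confines each scalar equation to a single frequency, so that equating coefficients of $\cos\alpha t,\sin\alpha t$ (and of $\cos\beta t,\sin\beta t$) is legitimate and discards no information. This rests entirely on the constancy of $\omega_{ij}$ and of $\dot{\bf q}_i\cdot\dot{\bf q}_i$, which is why establishing those identities at the outset is essential. Once they are secured, the remaining work — the simplification through $r_i^2+\rho_i^2=1$ and the concluding factorization — is routine.
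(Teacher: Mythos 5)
First, a point of reference: the paper itself never proves this criterion; it quotes it from \cite{Diacu4}, pp.~71--72, and \cite{Diacu3}, pp.~44--45, so there is no in-paper proof to compare yours against. Judged on its own, your argument is correct in its main line, and it follows the same route as the computation the paper does carry out when proving Theorem~\ref{criterion}: your matching identities are exactly the paper's equations \eqref{one}--\eqref{four}, which appear there specialized to the equilateral case where every $\omega_{ij}$ equals the common value $A$. The individual steps all check: $q_{ij}=\omega_{ij}$, $q_{ii}=r_i^2+\rho_i^2=1$, and $\dot{\bf q}_i\cdot\dot{\bf q}_i=\alpha^2 r_i^2+\beta^2\rho_i^2$ are constant in $t$; consequently each scalar component of \eqref{both} involves the single frequency $\alpha$ (for $w,x$) or $\beta$ (for $y,z$), and since $\cos\alpha t,\sin\alpha t$ are linearly independent functions when $\alpha\neq0$, equating their coefficients loses nothing; the twelve relations \eqref{w10}--\eqref{z10} are precisely the statement that the gravitational force vanishes at $t=0$, i.e.\ that the initial configuration is a fixed point; and combining these with the matching identities gives $(\beta^2-\alpha^2)r_i\rho_i^2=0$ and $(\alpha^2-\beta^2)\rho_i r_i^2=0$ for every $i$, from which the dichotomy (i)/(ii) and its converse both follow by the factorization you describe.

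There is, however, one hole in your forward direction. When $\alpha^2\neq\beta^2$ you conclude $r_i\rho_i=0$ for all $i$ and set ${\mathcal J}=\{i:r_i=0\}$; but if $r_i=0$ for \emph{all} $i$ (hence $\rho_i=1$ for all $i$, a rotating great-circle solution in the $y,z$ coordinates), then ${\mathcal J}=\{0,1,2\}$ is not a \emph{proper} subset, so you have not obtained (i), and (ii) fails by hypothesis. This degenerate case must be handled separately. Note that such a solution has $w_i\equiv x_i\equiv0$ and so does not involve the frequency $\alpha$ at all; since the right-hand side of the criterion asserts only the \emph{existence} of constants satisfying \eqref{w10}--\eqref{z10} together with (i) or (ii), you can certify it by the swapped constants $\tilde r_i=1$, $\tilde a_i=b_i$, $\tilde\rho_i=0$: for these, \eqref{w10} and \eqref{x10} become your original $y$- and $z$-relations, \eqref{y10} and \eqref{z10} hold trivially, and (i) holds with ${\mathcal J}=\emptyset$, which is proper. (The mirror case, all $\rho_i=0$, causes no difficulty, since there ${\mathcal J}=\emptyset$ already.) With this repair your equivalence is complete.
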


We are now in a position to prove the following result.

%%%%%%%%%%
\begin{theorem}\label{criterion}
System \eqref{both}, with $\sigma=1$, has positive elliptic-elliptic Lagrangian relative equilibria if and only if all three masses are equal. Moreover, such solutions are always generated from fixed point configurations in $\mathbb S^3$.
\end{theorem}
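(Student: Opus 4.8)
The plan is to mirror the strategy of Theorem~\ref{theorem-elliptic}, but now exploiting both rotational frequencies $\alpha$ and $\beta$ simultaneously. First I would invoke the angular-momentum integrals: for an elliptic-elliptic solution the components $c_{wy}, c_{wz}, c_{xy}, c_{xz}$ involve mixed terms of the two rotations and, by a parity argument in $t$ analogous to the one used in Lemma~\ref{lemma1} (evaluating at $t=0$ and at a half-period of the relevant frequency), one should be able to force a system of linear relations among the quantities $m_i r_i\sin a_i$, $m_i\rho_i\sin b_i$, $m_i r_i\cos a_i$, $m_i\rho_i\cos b_i$. The goal of this first stage is to show, exactly as before, that the configuration is genuinely rigid: that is, $r_0=r_1=r_2=:r$ and $\rho_0=\rho_1=\rho_2=:\rho$, together with a symmetric spacing of the phase angles.

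Second, I would combine the constraint $q_{01}=q_{02}=q_{12}$ with the explicit expression $\omega_{ij}=r_ir_j\cos(a_i-a_j)+\rho_i\rho_j\cos(b_i-b_j)$. Once the radii are shown equal, the equilateral condition reduces to $\cos(a_i-a_j)$ and $\cos(b_i-b_j)$ being arranged so that all three $\omega_{ij}$ coincide; the natural candidate is the symmetric distribution $a_i = 2\pi i/3$ and $b_i=2\pi i/3$ (up to sign), giving $\omega_{ij}=\omega:=-\tfrac12(r^2+\rho^2)$ for all $i\ne j$. This is the elliptic-elliptic analogue of the ``the plane of the triangle is parallel to the $wx$-plane'' step, and it is what pins down the phases.

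Third, I would feed this symmetric configuration into the criterion stated just above (equations \eqref{w10}--\eqref{z10}). With $r_i\equiv r$, $\rho_i\equiv\rho$, $\omega_{ij}\equiv\omega$, and the phases in arithmetic progression, each of the twelve sums collapses into a single relation of the schematic form $m_j(\cos a_j - \omega\cos a_i)+m_k(\cos a_k-\omega\cos a_i)=0$. Using $\cos a_1=\cos a_2=-\tfrac12$ and $\sin a_1=-\sin a_2$ (and likewise for the $b$'s), the twelve equations degenerate into relations that can hold for nonzero $\omega$ only when $m_0=m_1=m_2$. In particular the $x$- and $z$-equations \eqref{x10} and \eqref{z10}, which carry the $\sin$ terms, should directly yield $m_1=m_2$ by the same cancellation that produced $m_1=m_2$ at the end of Theorem~\ref{theorem-elliptic}, and cyclic relabelling then gives full equality. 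Finally, since the criterion's alternative (ii) requires $|\alpha|=|\beta|$ while alternative (i) forces some $r_i$ or $\rho_i$ to vanish, I would check that the equal-mass equilateral configuration is consistent and that the solution indeed arises from a fixed-point configuration, establishing the ``moreover'' clause.

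The main obstacle I anticipate is the first stage: with two independent frequencies the angular-momentum components are sums of two sinusoids with generally incommensurate periods, so the clean ``evaluate at $t=0$ and $t=\pi/\omega$'' trick of Lemma~\ref{lemma1} does not transfer verbatim. I expect to need either a separate half-period argument for each frequency or, more robustly, to treat the distinct frequencies $\alpha$ and $\beta$ as giving linearly independent time-dependence, forcing each sinusoidal coefficient to vanish individually. Handling the degenerate cases where some $r_i$ or $\rho_i$ equals zero (so that a body lies entirely in one rotation plane) will also require separate attention, which is presumably why the authors isolate the fixed-point criterion with its dichotomy (i)/(ii) rather than arguing purely from the integrals.
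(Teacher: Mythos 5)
Your stage--one goal is false, so the argument cannot be repaired along the lines you propose. In the elliptic-elliptic setting the rigidity you are after simply fails: there exist positive elliptic-elliptic Lagrangian relative equilibria with unequal $r_i$ and unequal $\rho_i$. Indeed, take three equal masses at the vertices of the equilateral triangle
\[
{\bf q}_0(0)=(1,0,0,0),\qquad {\bf q}_1(0)=\left(-\tfrac{1}{2},0,\tfrac{\sqrt{3}}{2},0\right),\qquad {\bf q}_2(0)=\left(-\tfrac{1}{2},0,-\tfrac{\sqrt{3}}{2},0\right),
\]
which lie on the great circle of $\mathbb S^3$ contained in the $wy$-plane. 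One checks directly that this is a fixed-point configuration (for body $0$, for instance, ${\bf q}_1+{\bf q}_2=-{\bf q}_0$ and $q_{01}=q_{02}=-\tfrac12$, so the attracting terms cancel), and the criterion theorem (its alternative (ii) holds once we choose $|\alpha|=|\beta|$) then produces from it a genuine solution of system \eqref{both} of the form required by Definition \ref{elliptic-elliptic}. For this solution $r_0=1$, $r_1=r_2=\tfrac12$, $\rho_0=0$, $\rho_1=\rho_2=\tfrac{\sqrt3}{2}$, and $a_1=a_2=\pi$: the radii are not equal and the phases are not spaced by $2\pi/3$. Hence no argument---from the angular-momentum integrals or otherwise---can deliver your stage one, and stage two (the claim that $\omega_{ij}\equiv-\tfrac12(r^2+\rho^2)$ with phases in arithmetic progression) collapses with it. For what it is worth, your linear-independence idea applied to $c_{wy},c_{wz},c_{xy},c_{xz}$ yields, when $|\alpha|\ne|\beta|$, only the conditions $\sum_i m_ir_i\rho_i\cos(a_i\pm b_i)=\sum_i m_ir_i\rho_i\sin(a_i\pm b_i)=0$, which are far too weak to determine the configuration.

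There is also a logical gap in stage three. Equations \eqref{w10}--\eqref{z10} are not the equations satisfied by an arbitrary elliptic-elliptic relative equilibrium: the criterion theorem states that they characterize solutions \emph{generated from fixed-point configurations}, and whether your solution is of that type is part of what must be proved (it is exactly the ``moreover'' clause). What substituting the ansatz of Definition \ref{elliptic-elliptic} into system \eqref{both} actually yields are equations \eqref{one}--\eqref{four}, which differ from \eqref{w10}--\eqref{z10} by right-hand sides proportional to $\beta^2-\alpha^2$. The crux of the paper's proof---absent from your plan---is to exploit those extra terms: after disposing of the degenerate cases $r_1\sin a_1=0$ and $\rho_1\sin b_1=0$ (which lead to great-circle, equal-mass solutions), the $i=1$ equations for the $a$-phases and for the $b$-phases reduce to \eqref{two-1new} and \eqref{four-1new}, whose right-hand sides carry opposite signs; subtracting one from the other and using $r_1^2+\rho_1^2>0$ forces $(\alpha^2-\beta^2)(1-A^2)^{3/2}=0$, hence $|\alpha|=|\beta|$. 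Only at that point do \eqref{one}--\eqref{four} become \eqref{w10}--\eqref{z10}, so that the criterion applies and shows the solution is generated from a fixed point; the equality of the masses then follows either from the classification of equilateral fixed points or directly from \eqref{last-1} and \eqref{last-2}, which give $(m_1-m_2)(1-A)=0$. Your proposal never derives $|\alpha|=|\beta|$ from the dynamics; it is smuggled in by invoking the criterion prematurely.
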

%%%%%%%%%%
\begin{proof}  The existence of equal-mass positive elliptic-elliptic Lagrangian relative equilibria was established in \cite{Diacu3} and \cite{Diacu4}. We further show that 
all positive elliptic-elliptic Lagrangian relative equilibria must have equal masses and that they are always generated from fixed point configurations of equilateral triangles lying on great circles of great spheres.

Substitute a candidate solution as in Definition \ref{elliptic-elliptic} with $a_0=b_0=0$ in system \eqref{both} with $\sigma=1$. Some long but straightforward computations lead us to the following conditions that must be satisfied if the solutions exists:
\beq
\sum_{\stackrel{j=0}{j\ne i}}^2\frac{m_j(r_j\cos a_j-Ar_i\cos a_i)}{(1-A^2)^{3/2}}=r_i\rho_i^2(\beta^2-\alpha^2)\cos a_i,
\label{one}
\eeq
\beq
\sum_{\stackrel{j=0}{j\ne 2}}^3\frac{m_j(r_j\sin a_j-Ar_i\sin a_i)}
{(1-A^2)^{3/2}}=r_i\rho_i^2(\beta^2-\alpha^2)\sin a_i,
\label{two}
\eeq
\beq
\sum_{\stackrel{j=0}{j\ne i}}^2\frac{m_j(\rho_j\cos b_j-A\rho_i\cos b_i)}{(1-A^2)^{3/2}}=\rho_ir_i^2(\alpha^2-\beta^2)\cos b_i,
\label{three}
\eeq
\beq
\sum_{\stackrel{j=0}{j\ne i}}^2\frac{m_j(\rho_j\sin b_j-A\rho_i\sin b_i)}
{(1-A^2)^{3/2}}=\rho_ir_i^2(\alpha^2-\beta^2)\sin b_i,
\label{four} 
\eeq
$i=0,1,2$, where $A:=q_{01}=q_{02}=q_{12}$.

Since we assumed $a_0=0$, equation \eqref{two} corresponding to $i=0$ becomes
\begin{equation}\label{two-0}
m_1r_1\sin a_1=-m_2r_2\sin a_2,
\end{equation}
and since we assumed $b_0=0$, equation \eqref{four} corresponding to $i=0$ takes the form
\begin{equation}\label{four-0}
m_1\rho_1\sin b_1=-m_2\rho_2\sin b_2.
\end{equation}
Equations \eqref{two} and \eqref{four} corresponding to $i=1$ can be respectively written as
\begin{equation}\label{two-1}
r_1(m_0A+m_1+m_2A)\sin a_1=r_1\rho_1^2(\alpha^2-\beta^2)(1-A^2)^{3/2}\sin a_1,
\end{equation}
%%%%%%%%%
\begin{equation}\label{four-1}
\rho_1(m_0A+m_1+m_2A)\sin b_1=\rho_1r_1^2(\beta^2-\alpha^2)(1-A^2)^{3/2}\sin b_1.
\end{equation}

Notice that it is possible to have $r_1\sin a_1=0$ or
$\rho_1\sin b_1=0$. Assume $r_1=0$. Then, from equation \eqref{two-0}, either $r_2=0$ or $\sin a_2=0$. It is easy to see that in either case we are led to an equal-mass Lagrangian solution with$r_0=r_1=r_2=0$ (which implies that $\rho_0=\rho_1=\rho_2=1$ and $b_1=2\pi/3, b_2=4\pi/3$), i.e., an equilateral triangle that rotates on a great circle of a great sphere. If $\rho_1\sin b_1=0$, we can reach a similar conclusion in the same way. 

Let us therefore assume that $r_1\ne 0, \rho_1\ne 0$ as well as that $\sin a_1\ne 0$ and $\sin b_1\ne 0$. Then equations \eqref{two-1} and
\eqref{four-1} reduce to
%%%%%%%%%%
\begin{equation}\label{two-1new}
m_0A+m_1+m_2A=\rho_1^2(\alpha^2-\beta^2)(1-A^2)^{3/2},
\end{equation}
%%%%%%%%%
\begin{equation}\label{four-1new}
m_0A+m_1+m_2A=r_1^2(\beta^2-\alpha^2)(1-A^2)^{3/2}.
\end{equation}
From \eqref{two-1new} and \eqref{four-1new}, we can conclude that
$$
(\alpha^2-\beta^2)(1-A^2)^{3/2}=0.
$$
Since $A^2=1$ only for singular configurations, which do not occur for equilateral orbits, we can conclude that $|\alpha|=|\beta|$. Consequently equations \eqref{one}, \eqref{two}, 
\eqref{three}, \eqref{four} become equations \eqref{w10}, \eqref{x10}, 
\eqref{y10}, \eqref{z10}, respectively, with $\omega_{01}=\omega_{02}=\omega_{12}=A$.

By Theorem \ref{criterion}, it follows that our candidate solution is necessarily generated from a fixed point configuration. But the only case when an equilateral triangle can be a fixed point of the curved 3-body problem in $\mathbb S^3$ is when the bodies lie on a great circle of a great sphere and all masses are equal.

Alternatively, we could directly prove the equality of the masses under these circumstances by noticing that from equation \eqref{two-1} and the corresponding equation obtained for $i=2$ we respectively obtain the conditions
\begin{equation}\label{last-1}
r_1(m_0A+m_1+m_2A)\sin a_1=0,
\end{equation}
%%%%%%
\begin{equation}\label{last-2}
r_2(m_0A+m_2+m_1A)\sin a_2=0.
\end{equation}
Again, we can assume that $r_1\ne 0, r_2\ne 0, \sin a_1\ne 0$, and $\sin a_2\ne 0$, otherwise we can reach the conclusion that the masses are equal and the solution is generated from a fixed point configuration. But then
from \eqref{last-1} and \eqref{last-2}, we obtain that
$$
(m_1-m_2)(1-A)=0,
$$
which implies that $m_1=m_2$ and, eventually, that all the three masses are equal and the solution is generated from a fixed point configuration. This remark completes the proof.
\end{proof}

%%%%%%%
%%%%%%%

\medskip

\noindent{\bf Acknowledgements.} Florin Diacu acknowledges the partial support of the Discovery Grant 122045 from NSERC of Canada.

%%%%%%%%
%%%%%%%%


\begin{thebibliography}{99}


\bibitem{Bolyai} W.\ Bolyai and J.\ Bolyai, {\it Geometrische Untersuchungen}, Hrsg.\ P.\ St\"ackel, Teubner Verlag, Leipzig-Berlin, 1913.

\bibitem{Carinena} J.F.~Cari\~nena, M.F.~Ra\~nada, and M.~Santander, Central potentials on spaces of constant curvature: The Kepler problem on the two-dimensional sphere ${\mathbb S}^2$ and the hyperbolic plane ${\mathbb H}^2$, \emph{J.~Math.~Phys.}~{\bf 46} (2005), 052702. 

\bibitem{Diacu1} F. Diacu, On the singularities of the curved $N$-body problem, {\it Trans.\ Amer.\ Math.\ Soc.} {\bf 363}, 4 (2011), 2249--2264.

\bibitem{Diacu2} F. Diacu, Polygonal homographic
orbits of the curved 3-body problem, {\it Trans.\ Amer.\ Math.\ Soc.} {\bf 364} (2012), 2783--2802.

\bibitem{Diacu3} F.\ Diacu, Relative equilibria in the 3-dimensional curved $n$-body problem, {\it Memoirs Amer.\ Math.\ Soc.} {\bf 228}, 1071 (2014), 1-80.

\bibitem{Diacu4} F.\ Diacu, {\it Relative Equilibria of the Curved $N$-Body Problem}, Atlantis Press, 2012.

\bibitem{Diacu15} F.\ Diacu, The non-existence of centre-of-mass and linear momentum integrals in the curved N-body problem Libertas Math (new series) {\bf 32}, 1 (2012), 25--37.

\bibitem{Diacu14} F.\ Diacu, The classical $N$-body problem in the context of curved space, arXiv:1405.0453 (2014).

\bibitem{Diacu9} F.~Diacu, R.~Mart\'inez, E.\ P\'erez-Chavela, and C.\ Sim\'o, On the stability of tetrahedral relative equilibria in the positively curved 4-body problem, {\it Physica D} {\bf 256-257} (2013), 21--35.

\bibitem{Diacu5} F.\ Diacu and E.\ P\'erez-Chavela, Homographic solutions of the curved $3$-body problem, {\it J.\ Differential Equations} {\bf 250} (2011), 340--366.

\bibitem{Diacu6} F.\ Diacu, E.\ P\'erez-Chavela, and J.\ Guadalupe Reyes Victoria, An intrinsic approach in the curved $N$-body problem. The negative curvature case, {\it J.\ Differential Equations} {\bf 252} (2012), 4529--4562.

\bibitem{Diacu7} F.~Diacu, E.~P\'erez-Chavela, and M.~Santoprete, The $N$-body problem in spaces of constant curvature. Part I: Relative equilibria, {\it J.\ Nonlinear Sci.} {\bf 22}, 2 (2012), 247--266, DOI: 10.1007/s00332-011-9116-z.

\bibitem{Diacu8} F.~Diacu, E.~P\'erez-Chavela, and M.~Santoprete, The $N$-body problem in spaces of constant curvature. Part II: Singularities, {\it J.\ Nonlinear Sci.} {\bf 22}, 2 (2012), 267--275, DOI: 10.1007/s00332-011-9117-y.

\bibitem{Diacu10} F.~Diacu and B.\ Thorn, Rectangular orbits of the curved 4-body problem, {\it Proc.\ Amer.\ Math.\ Soc.} (to appear).

\bibitem{Killing} W.~Killing, Die Mechanik in den nichteuklidischen Raumformen, {\it J. Reine Angew. Math.} {\bf 98} (1885), 1--48.

\bibitem{Kozlov} V.~V.~Kozlov and A.~O.~Harin, Kepler's problem in constant curvature spaces, {\it Celestial Mech.~Dynam.~Astronom} {\bf 54} (1992), 393--399. 

\bibitem{Liebmann1} H.~Liebmann, Die Kegelschnitte und die Planetenbewegung im nichteuklidischen Raum, {\it Berichte K\"onigl.~S\"achsischen Gesell. Wiss., Math.~Phys.~Klasse} {\bf 54} (1902), 393--423.

\bibitem{Liebmann2} H.~Liebmann, \"Uber die Zentralbewegung in der nichteuklidische Geometrie, {\it Berichte K\"onigl.~S\"achsischen Gesell. Wiss., Math.~Phys.~Klasse} {\bf 55} (1903), 146--153.

\bibitem{Lobachevsky} N.~I.~Lobachevsky, The new foundations of geometry with full theory of parallels [in Russian], 1835-1838, In Collected Works, V. 2, GITTL, Moscow, 1949, p. 159.

\bibitem{Martinez} R.\ Mart\'inez and C.\ Sim\'o, On the stability of the Lagrangian homographic solutions in a curved
three-body problem on ${\bf S}^2$, {\it Discrete Contin.\ Dyn.\ Syst.\ Ser.\ A} {\bf 33} (2013) 1157--1175.

\bibitem{Perez} E.~P\'erez-Chavela and J.G.~Reyes Victoria, An intrinsic approach in the curved $N$-body problem. The positive curvature case, {\it Trans.\ Amer.\ Math.\ Soc.} {\bf 364}, 7 (2012), 3805--3827.

\bibitem{Schering} E.~Schering, Die Schwerkraft im Gaussischen R\"aume, {\it Nachr. K\"onigl. Gesell. Wiss. G\"ottingen} 13 July, {\bf 15} (1870), 311--321.

\bibitem{Shchepetilov} A.V.~Shchepetilov,  Nonintegrability of the two-body problem in constant curvature spaces, {\it J.\ Phys. A: Math.\ Gen.} V.\ 39 (2006), 5787--5806; corrected version at math.DS/0601382.

\bibitem{Tibboel1} P.\ Tibboel, Polygonal homographic orbits in spaces of constant curvature, {\it Proc.\ Amer.\ Math.\ Soc.} {\bf 141}, 1 (2013), 1465--1471.

\bibitem{Tibboel2} P.\ Tibboel, Existence of a class of rotopulsators, {\it J.\ Math.\ Anal.\ Appl.} {\bf 404}, 1 (2013), 185--191.

\bibitem{Tibboel3} P.\ Tibboel, Existence of a lower bound for the distance between point masses of relative equilibria in spaces of constant curvature, {\it J.\ Math.\ Anal.\ Appl.} {\bf 416}, 1 (2014), 205--211.

\bibitem{Wintner} A.\ Wintner, {\it The Analytical Foundations of Celestial Mechanics}, Princeton University\ Press, 1947.

\bibitem{Zhu} S.\ Zhu, Eulerian relative equilibria of the curved 3-body problem in ${\bf S}^2$, {\it Proc.\ Amer.\ Math.\ Soc.} {\bf 142}, 8 (2014), 2837--2848.

\end{thebibliography}
\end{document}